\theoremstyle{plain}
\newtheorem{thm}{Theorem}
\theoremstyle{definition}
\newtheorem{prop}[thm]{Proposition}
\newtheorem{cor}[thm]{Corollary}
\theoremstyle{remark}
\newcommand{\ti}[1]{\textit{#1}}
\newcommand{\tn}[1]{\textnormal{#1}}
\newcommand{\norm}[1]{\left\Vert#1\right\Vert}
\newcommand{\abs}[1]{\left\lvert#1\right\rvert}
\newcommand{\eps}{\varepsilon}
\newcommand{\bbC}{\mathbb{C}}
\newcommand{\bbN}{\mathbb{N}}
\newcommand{\cR}{\mathcal{R}}
\newcommand{\cU}{\mathrm{U}}
\newcommand{\cSU}{\mathrm{S}\mathrm{U}}
\DeclareMathOperator{\diam}{diam}
\DeclareMathOperator{\tr}{tr}
\DeclareMathOperator{\rk}{rk}
\title[Extreme Amenability]{A new proof of extreme amenability of the unitary group of the hyperfinite II$_1$ factor}
\author{Philip A. Dowerk}
\address{P.A.D., Analysis Section, KU Leuven, 3001 Leuven, Belgium}
\email{philip.dowerk@wis.kuleuven.be}
\author{Andreas Thom}
\address{A.T., Institut f\"ur Geometrie, TU Dresden, 01062 Dresden, Germany }
\email{andreas.thom@tu-dresden.de}
\begin{document}

\onehalfspace

\begin{abstract}
 We provide an alternative proof for the extreme amenability of the unitary group of the hyperfinite II${}_1$-factor von Neumann algebra, endowed with the strong operator topology.
\end{abstract}

\maketitle


\section{Introduction}

Ever since the introduction of {\it rings of operators} in the groundbreaking work of Murray and von Neumann \cite{MvN-36, MvN-43}, the hyperfinite II$_1$-factor $\cR$ has played an important role in (what is nowadays called) the theory of von Neumann algebras. The first construction of $\cR$ was given in terms of finite-dimensional matrix algebras or for example as the group von Neumann algebra of the group $S_{\infty}$ of finitary permutations on the set of natural numbers. 
In seminal work of Connes \cite{Co-76}, $\cR$ was shown to be the unique injective factor of type II$_1$. It also followed, that $\cR$ is isomorphic to the group von Neumann algebra $L\Gamma$ for every countable, amenable group $\Gamma$, whose non-trivial conjugacy classes are all infinite.

The direct relationship between the concept of amenability and hyperfiniteness triggered the question in what sense the unitary group $\cU(\cR) = \{u \in \cR \mid uu^*=u^*u=1\}$ of the hyperfinite II$_1$-factor is amenable itself as a topological group. This point was clarified by Giordano-Pestov \cite{GP-06} who showed that $\cU(\cR)$ is extremely amenable.
The aim of this note is to provide a new and direct proof of extreme amenability of the unitary group of the hyperfinite II$_1$ factor, endowed with the strong operator topology. 
Recall, a topological group $G$ is said to be \textbf{extremely amenable} if every continuous action of $G$ on a compact Hausdorff space $X$ admits a fixed point. A nice account on the history of the subject can be found in Pestov's book \cite{Pes-06}.

A milestone in the study of extreme amenability was set by Gromov and Milman \cite{GM-83} - they proved that the unitary group $\cU(\ell^2(\mathbb N))$ with the strong operator topology is extremely amenable.
 The core in their proof is to show that $\cU(\ell^2(\mathbb N))$ with the strong operator topology is a L\'evy group by treating $\cSU(n)$ as a Riemannian manifold and showing that $\inf_{t}\mathrm{Ric}(t,t)\rightarrow \infty$ as $n\rightarrow \infty$, where $t$ runs over all unit tangent vectors in the tangent space of $\cSU(n)$. Together with the isoperimetric inequality this implied that $(\cSU(n),d_n,\mu_n)$ forms a L\'evy family with the respect to the unnormalized Hilbert-Schmidt metric $d_n$ and Haar measure $\mu_n$ on $\cSU(n)$. This rather deep fact was then used in the proof of extreme amenability of $\cU(\cR)$ by Giordano and Pestov \cite{GP-06}. Note that the corresponding statement fails for the family $(\cU(n),d_n,\mu_n)$, since the Ricci curvature vanishes on tangent vectors corresponding to the center of $\cU(n)$.
 
The main purpose of this note is to give a direct argument for the fact that after normalization of the metric the unitary groups do form a L\'evy family. 

\begin{thm}
The family $(\cU(n),d_n/n,\mu_n)_n$ is a L\'evy family. 
\end{thm}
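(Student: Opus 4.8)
The plan is to deduce the statement from the corresponding fact for the special unitary groups, isolating the one remaining direction---the centre---and showing that the normalisation by $n$ crushes it. Concretely, I would work with the surjective continuous homomorphism
\[
\pi\colon \cSU(n)\times\bbT\longrightarrow\cU(n),\qquad (g,z)\longmapsto zg,
\]
whose kernel is the finite central subgroup $\set{(\omega^{-k}I,\omega^{k})\mid 0\le k<n}$ with $\omega=\exp(2\pi\mathrm{i}/n)$. Being a continuous surjection of compact groups, $\pi$ carries the product of the normalised Haar measures to $\mu_n$, so it is measure preserving. The first step is to record this alongside the orthogonal splitting $\mathfrak{u}(n)=\mathfrak{su}(n)\oplus\bbR\,\mathrm{i}I$ for the Hilbert--Schmidt inner product, in which the central generator $\mathrm{i}I$ has norm $\sqrt n$; this is the source of the $\sqrt n$-stretching in the centre.

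Next I would equip the product with the scaled $\ell^{1}$-metric
\[
D_n\brackets{(g,z),(g',z')}=\tfrac1n\norm{g-g'}_{\mathrm{HS}}+\tfrac1{\sqrt n}\abs{z-z'}.
\]
The elementary estimate $\norm{zg-z'g'}_{\mathrm{HS}}\le\norm{g-g'}_{\mathrm{HS}}+\sqrt n\,\abs{z-z'}$ then shows that $\pi$ is $1$-Lipschitz as a map $(\cSU(n)\times\bbT,D_n)\to(\cU(n),d_n/n)$. The two factors now behave oppositely. On the first factor the metric is $\tfrac1n$ times the unnormalised Hilbert--Schmidt metric; since $(\cSU(n),d_n,\mu_n)_n$ is a L\'evy family and contracting a metric only enlarges $\eps$-neighbourhoods, $(\cSU(n),\tfrac1n d_n,\mu_n)_n$ is again a L\'evy family. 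On the second factor the circle acquires diameter $2/\sqrt n\to0$; this is exactly what the division by $n$ buys, and one should note that the $L^{2}$-normalisation by $\sqrt n$ would leave the centre of bounded diameter and would fail.

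The remaining two steps are standard concentration bookkeeping. First, the product of a L\'evy family with a family of diameters tending to zero is a L\'evy family: for $1$-Lipschitz $F$ on the product and a fixed basepoint $z_0\in\bbT$, the map $g\mapsto F(g,z_0)$ is $1$-Lipschitz on the first factor, while $\abs{F(g,z)-F(g,z_0)}\le 2/\sqrt n$, so the concentration of the product is controlled by that of the first factor up to the additive error $2/\sqrt n$. Second, concentration transfers along $\pi$: if $A\subseteq\cU(n)$ has $\mu_n(A)\ge\tfrac12$, then $\pi^{-1}(A)$ has the same measure, its $\eps$-neighbourhood is mapped by $\pi$ into the $\eps$-neighbourhood of $A$, and pushing the measure forward gives $\alpha_{(\cU(n),d_n/n)}(\eps)\le\alpha_{(\cSU(n)\times\bbT,D_n)}(\eps)$. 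Chaining these, for every fixed $\eps>0$ and all $n$ with $2/\sqrt n<\eps/2$ one obtains a bound of the shape $\alpha_{(\cU(n),d_n/n)}(\eps)\le\alpha_{(\cSU(n),d_n)}(n\eps/2)$, which tends to $0$ by monotonicity of the concentration function and the L\'evy property of $\cSU(n)$.

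The conceptual heart, and the step I expect to demand the most care, is the scale-matching in the second paragraph: the central direction is precisely where the Ricci curvature of $\cU(n)$ degenerates and along which $d_n$ dilates by $\sqrt n$, so concentration can be recovered only by a normalisation that contracts it to nothing while keeping the huge-dimensional $\cSU(n)$ part (super-)concentrated. Verifying that division by $n$---rather than by $\sqrt n$---achieves exactly this, and that $\pi$ is genuinely $1$-Lipschitz for the stated product metric, is where the quantitative content sits; the passage between the chordal Hilbert--Schmidt metric and the geodesic metric, if needed at all, costs only an absolute constant and leaves the L\'evy property untouched.
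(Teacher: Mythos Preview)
Your argument is correct and gives a clean conceptual picture of why division by $n$ is exactly the right normalisation: it shrinks the central circle to diameter $O(n^{-1/2})$ while the $\cSU(n)$-factor remains (super-)concentrated, and concentration then pushes forward along the $1$-Lipschitz measure-preserving surjection $\pi$. However, your route is essentially the opposite of the paper's. You invoke the Gromov--Milman theorem for $(\cSU(n),d_n,\mu_n)_n$ as a black box --- precisely the curvature/isoperimetry input the paper explicitly sets out to bypass --- and reduce the $\cU(n)$-statement to it. The paper instead gives a self-contained argument: it filters $\cU(n)$ by the tower $\cU(1)\subset\cU(2)\subset\cdots\subset\cU(n)$, proves by an elementary rank estimate (Proposition~\ref{prop_trace}) that each quotient $\cU(k)/\cU(k-1)$ has diameter at most $4/n$ in the normalised trace metric $d_{1,n}$, and feeds this into the martingale (Azuma-type) concentration inequality \cite[Theorem~2.9]{GP-06} to obtain the explicit bound $\alpha_{\cU(n)}(\eps)\le 2\exp(-n\eps^{2}/128)$. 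What your approach buys is transparency about the role of the centre and the degenerate curvature direction; what the paper's approach buys is a curvature-free, elementary proof with an explicit exponential rate --- and in fact for the larger metric $d_{1,n}\ge d_n/n$, which your decomposition does not immediately yield since the central factor then has diameter bounded away from zero.
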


In particular, we do not rely on the relationship with the isoperimetric inequality and on curvature computations. To the best of our knowledge, this direct argument was unnoticed -- and still implies in a straightforward way extreme amenability of $U(\cR)$ using \cite[Theorem 4.1.3]{Pes-06}.

\begin{cor}[Giordano-Pestov \cite{GP-06}]\label{thm_extr_amen}
	The unitary group $\cU(\cR)$ of the hyperfinite II$_1$ factor, endowed with the strong operator topology, is extremely amenable.
\end{cor}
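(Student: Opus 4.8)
The plan is to deduce the Corollary from the Theorem by exhibiting $\cU(\cR)$ as a \emph{L\'evy group} in the sense of Gromov--Milman, and then invoking \cite[Theorem 4.1.3]{Pes-06}, which asserts that every L\'evy group is extremely amenable. Recall that a topological group $G$ is a L\'evy group if it carries an increasing chain $G_1 \subseteq G_2 \subseteq \cdots$ of compact subgroups with $\overline{\bigcup_k G_k} = G$, equipped with the normalized Haar measures $\mu_k$ on $G_k$ and a bi-invariant metric generating the topology of $G$, such that the family $(G_k,\mu_k)_k$ is a L\'evy family for that metric. The task thus reduces to producing such a chain inside $\cU(\cR)$ whose associated metric-measure family is exactly the one controlled by the Theorem.

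First I would fix the standard presentation of the hyperfinite factor as $\cR = \big(\bigcup_k M_{2^k}(\bbC)\big)''$, where the inclusions $M_{2^k} \hookrightarrow M_{2^{k+1}}$, $a \mapsto a \otimes \Id_2$, are trace preserving. These induce an increasing chain of compact subgroups $\cU(2^k) \hookrightarrow \cU(2^{k+1}) \subseteq \cU(\cR)$. I would then equip $\cU(\cR)$ with the bi-invariant metric $\rho(u,v) = \tau\big((u-v)^*(u-v)\big)^{1/2}$ coming from the trace $\tau$ on $\cR$; on the norm-bounded group $\cU(\cR)$ this metric generates exactly the strong operator topology, since on bounded sets the strong topology is determined by the cyclic trace vector. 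Because the inclusions are trace preserving, the restriction of $\rho$ to each $\cU(2^k)$ coincides with the normalized Hilbert--Schmidt metric featured in the Theorem; consequently the Theorem asserts precisely that $(\cU(2^k),\rho,\mu_{2^k})_k$ is a L\'evy family.

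Two density and topology points remain before the L\'evy-group criterion applies. First, $\bigcup_k \cU(2^k)$ must be dense in $\cU(\cR)$ for the strong operator topology; this I would obtain from the Kaplansky density theorem, which guarantees that the unitary group of the weakly dense $*$-subalgebra $\bigcup_k M_{2^k}$ is strongly dense in $\cU(\cR)$. Second, I would record that $\rho$ is genuinely bi-invariant and metrizes the strong operator topology on the bounded set $\cU(\cR)$, so that it qualifies as an admissible metric in Pestov's sense. With these in place, $\cU(\cR)$ together with the chain $(\cU(2^k))_k$, the metric $\rho$, and the measures $\mu_{2^k}$ satisfies the definition of a L\'evy group, and \cite[Theorem 4.1.3]{Pes-06} yields extreme amenability.

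The genuinely deep content, namely the concentration of measure on the groups $\cU(2^k)$, is supplied by the Theorem and is assumed here. The only real obstacle in the deduction is bookkeeping: ensuring that the normalization of the Hilbert--Schmidt metric in the Theorem matches the trace metric $\rho$ under the embeddings $\cU(2^k) \hookrightarrow \cU(\cR)$, so that $\rho$ restricts along the chain to exactly the concentrating family of the Theorem (the embeddings must rescale the metric by precisely the factor that keeps $\rho$ invariant), and confirming that $\rho$ both generates the strong operator topology and is bi-invariant, so that Pestov's abstract L\'evy-group theorem applies verbatim.
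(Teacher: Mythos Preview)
Your proposal is correct and follows essentially the same route as the paper: realize $\cR$ as the infinite tensor product of copies of $M_2(\bbC)$, use the resulting chain $\cU(2^k)\subset\cU(\cR)$ with dense union, observe that the ambient trace metric restricts to the normalized metric on each $\cU(2^k)$ so that the main theorem yields a L\'evy family, and conclude via \cite[Theorem~4.1.3]{Pes-06}. The only cosmetic difference is that the paper phrases everything with the normalized $L^1$ (trace-norm) metric $d_{1,n}$ rather than the $L^2$ (Hilbert--Schmidt) metric you chose; on the unitary group these are uniformly equivalent and both induce the strong operator topology, so the argument is unaffected.
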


\section{Metric measure spaces}

Recall that a \textbf{space with metric and measure}, or a \textbf{$mm$-space}\index{$mm$-space}, is a triple $(X,d,\mu)$ consisting of a set $X$, a metric $d$ on $X$ and a probability Borel measure on the metric space $(X,d)$.
The \textbf{concentration function} $\alpha_X:[0,\infty)\rightarrow [0,1/2]$ of an \ti{mm}-space $X$ (introduced by Milman and Schechtman in \cite{Mil-86, MS-86}) is defined as
\begin{align*}
	\alpha_X(\varepsilon)=\left\{
	\begin{aligned}
		& 1/2, &\mbox{ if } \varepsilon =0,\\
		&1-\inf\{\mu(A_{\varepsilon})\mid A\subseteq X\tn{ is Borel },\mu(A)\geq 1/2\},&\mbox{ if } \varepsilon >0.
	\end{aligned}
	\right.
\end{align*}
A family $(X_n,d_n,\mu_n)_{n}$ of $mm$-spaces is a \textbf{L\'evy family} if $\alpha_{X_n}(\eps)\rightarrow_{n\rightarrow\infty} 0$ pointwise for all $\eps>0$.
This is not the original definition of a L\'evy family, but it is equivalent.

A metrizable group $(G,d)$ is called \textbf{L\'evy group} if there is a family of compact subgroups $(G_{n})_n$ of $G$, directed by inclusion, with dense union and such that $(G_n,d_n,\mu_n)_{n}$ forms a L\'evy family, where $d_n$ is the restriction of $d$ to $G_n$ and $\mu_n$ is the normalized Haar measure on $G_{n}$.

Let $d_{1,n}$ denote the normalized trace metric on the space $M_{n\times n}(\mathbb{C})$ of $n\times n$-matrices induced from the normalized trace norm $\norm{\cdot}_{1,n}$, where $n\in\bbN$. That is, with $\tr$ the unnormalized trace on $M_{n\times n}(\bbC)$,
$d_{1,n}(u,v)=\norm{u-v}_{1,n}=\frac{1}{n}\tr(\abs{u-v}),\ u,v\in M_{n\times n}(\mathbb{C}).$ Here, $|a| = (a^*a)^{1/2}$ denotes the absolute value of the matrix as usual.
 Denote by $\rk(x)$ the rank of $x\in M_{n\times n}(\mathbb{C})$ and by $\norm{\cdot}_{\infty,n}:=\sup_{\xi\in\bbC^n, \norm{\xi}_n=1}\norm{\cdot\xi}_n$ the operator norm. Note that $\|xy\|_{1,n} \leq \|x\|_{1,n} \|y\|_{\infty,n}$ and hence
 \begin{equation} \label{lem_traceopineq}
\norm{x}_{1,n}\leq \frac{\rk(x)}{n}\norm{x}_{\infty,n}.
\end{equation}

\begin{prop}
Let $1 \leq k\leq n\in\bbN$ and $u\in \cU(k)$. Then there exists $v\in \cU(k-1)$ such that $d_{1,n}(v \oplus 1_{n-k+1},u \oplus 1_{n-k}) \leq 4/n.$
\label{prop_trace}
\end{prop}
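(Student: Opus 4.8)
The plan is to realize the approximant $v\oplus 1_{n-k+1}$ as a \emph{rank-two} perturbation of $u\oplus 1_{n-k}$ and then read off the bound from the rank--operator-norm estimate \eqref{lem_traceopineq}. Observe first that both $u\oplus 1_{n-k}$ and $v\oplus 1_{n-k+1}$ act as the identity on the last $n-k$ coordinates of $\bbC^n$, so their difference
\[
 x := (u\oplus 1_{n-k}) - (v\oplus 1_{n-k+1})
\]
is supported on the top-left $k\times k$ block. Identifying the restriction of $v\oplus 1_{n-k+1}$ to the first $k$ coordinates with $v\oplus 1\in\cU(k)$, this block equals $u-(v\oplus 1)$, padded by zeros. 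Consequently $\rk(x)=\rk\!\big(u-(v\oplus 1)\big)$ and $\norm{x}_{\infty,n}=\norm{u-(v\oplus 1)}_{\infty,k}$, so everything reduces to choosing $v$ well on the $k\times k$ level.

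The key step is to produce $v\in\cU(k-1)$ for which $u-(v\oplus 1)$ has rank at most $2$. I would set $w:=ue_k\in\bbC^k$, a unit vector, and choose a unitary $R\in\cU(k)$ with $Rw=e_k$ that restricts to the identity on the orthogonal complement of $V:=\operatorname{span}\{w,e_k\}$. Such an $R$ exists because $\dim V\leq 2$ and any unit vector of $V$ can be carried to any other by a unitary of $V$; extending by the identity off $V$ gives a genuine element of $\cU(k)$ with $\rk(1_k-R)\leq 2$ and $\norm{1_k-R}_{\infty,k}\leq 2$. By construction $Ru$ fixes $e_k$, and being unitary it then also preserves $e_k^{\perp}=\operatorname{span}\{e_1,\dots,e_{k-1}\}$; hence $Ru=v\oplus 1$ for a unique $v\in\cU(k-1)$, which is the desired approximant.

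With this $v$ the computation is immediate: $u-(v\oplus 1)=u-Ru=(1_k-R)u$. Since $u$ is invertible, $\rk\!\big((1_k-R)u\big)=\rk(1_k-R)\leq 2$, and since $u$ is unitary, $\norm{(1_k-R)u}_{\infty,k}=\norm{1_k-R}_{\infty,k}\leq 2$. Transporting these two bounds to $x$ as in the first paragraph and feeding them into \eqref{lem_traceopineq} gives
\[
 d_{1,n}(v\oplus 1_{n-k+1},\,u\oplus 1_{n-k})=\norm{x}_{1,n}\leq \frac{\rk(x)}{n}\,\norm{x}_{\infty,n}\leq \frac{2}{n}\cdot 2=\frac{4}{n},
\]
which is exactly the claimed estimate.

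I do not anticipate a real obstacle here, since the bound falls out of \eqref{lem_traceopineq} once the perturbation is rank two. The only point that genuinely needs care is the construction of the single Givens-type rotation $R$: it must simultaneously send $w$ to $e_k$ and fix $V^{\perp}$ pointwise, so that $\rk(1_k-R)\leq 2$ holds uniformly in $k$ and $n$. The operator-norm bound $\norm{1_k-R}_{\infty,k}\leq 2$ is automatic for any unitary difference, so no further estimation is required.
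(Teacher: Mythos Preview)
Your argument is correct and follows essentially the same route as the paper: build a Givens-type rotation supported on the two-dimensional span of $e_k$ and its image/preimage under $u$, so that the corrected unitary fixes $e_k$ and hence lies in $\cU(k-1)\oplus 1$; then feed the resulting rank-$\leq 2$, operator-norm-$\leq 2$ difference into \eqref{lem_traceopineq}. The only cosmetic differences are that you work with $ue_k$ and the product $Ru$ (computing $u-(v\oplus1)=(1-R)u$ directly), whereas the paper works with $u^*e_n$ and the product $(1_{X^\perp}\oplus w)u^*$ (computing $1-vu$), and that your construction handles all $k\geq 1$ uniformly rather than casing out $k=1,2$.
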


\begin{proof}
The cases $k=1,2$ are trivial since $d(1_n,u \oplus 1_{n-2}) \leq 2/n$ for all $u \in U(2)$. Consider the case $k \geq 3$ and assume without loss of generality that $k=n$. Denote by $\{e_k\}_{k=1,\ldots,n}$ the standard orthonormal basis of $\mathbb{C}^n$. If $ue_n=e_n$, then $u\in\cU(n-1)$ and we can choose $v=u^*\in\cU(n-1)$.
Hence assume that $u^*e_n=\xi\neq e_n$ and consider $X:=\mathrm{span}\langle e_n,\xi\rangle \cong \mathbb{C}^2$. There exists an unitary operator $w \colon X\rightarrow X$ such that $w \xi=e_n$. Define $v:=(1_{X^{\bot}}\oplus w)u^*$ with $X^{\bot}$ the orthogonal complement of $X$. Then $v \in\cU(n-1)$ and $x = 1 -vu = 0_{X^{\perp}} \oplus (1_X - w)$. Hence, $\rk(x) \leq 2$, $\|x\|_{\infty,n} \leq 2$ and the estimate \eqref{lem_traceopineq} imply that
$d_{1,n}(1,uv)=\norm{1-vu}_{1,n}=\norm{x}_{1,n}\leq 4/n.$
\end{proof}


Assume that $H$ is a closed subgroup of a compact group $G$, equipped with a bi-invariant metric $d$. Then the formula $\widetilde{d}(g_1H,g_2H):=\inf_{h_1,h_2\in H}d(g_1h_1,g_2h_2)$ defines a left-invariant metric on the factor space $G/H$, see Lemma 4.5.2 in \cite{Pes-06}. We refer to $\widetilde{d}$ as the \textbf{factor metric}\index{factor metric}. Define the \textbf{diameter}\index{diameter} $\diam(G/H)$ of the factor space $G/H$ to be
\begin{align*}
	\diam(G/H):=\sup_{g_1,g_2\in G}\inf_{h_1,h_2\in H}d(g_1h_1,g_2h_2).
\end{align*}



\section{Proof of the main result}

\begin{prop}
	$\left(\cU(n),d_{1,n},\mu_n\right)_{n\in\bbN}$ forms a L\'evy family, where $d_{1,n}$ denotes the normalized trace metric on $\cU(n)$ and $\mu_n$ is the normalized Haar measure on $\cU(n)$.
	\label{prop_levy}
\end{prop}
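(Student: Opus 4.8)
The plan is to avoid curvature entirely and instead run the bounded-differences (martingale) method of Gromov--Milman along the tower of closed subgroups
$$\{1\} = \cU(0) \subset \cU(1) \subset \cdots \subset \cU(n),$$
where $\cU(k)$ is embedded in $\cU(n)$ via $u \mapsto u \oplus 1_{n-k}$. The single quantitative ingredient is the uniform control on the sizes of the successive quotients furnished by Proposition \ref{prop_trace}.

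First I would translate Proposition \ref{prop_trace} into a diameter bound. Since $d_{1,n}$ is bi-invariant, the factor metric satisfies $\diam(\cU(k)/\cU(k-1)) = \sup_{u \in \cU(k)} \dist(u, \cU(k-1))$, and Proposition \ref{prop_trace} bounds this supremum by $4/n$. Thus, writing $a_k := \diam(\cU(k)/\cU(k-1))$, we get $a_k \leq 4/n$ for every $1 \leq k \leq n$, so that $\sum_{k=1}^n a_k^2 \leq n \cdot (4/n)^2 = 16/n$.

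Second, given a $1$-Lipschitz function $f \colon \cU(n) \to \bbR$, I would build the L\'evy martingale adapted to the tower by setting $f_k(g) := \int_{\cU(k)} f(gh)\, d\mu_{\cU(k)}(h)$, so that $f_0 = f$, $f_n = \Expect f$ is constant, and $f - \Expect f = \sum_{k=1}^n D_k$ with $D_k := f_{k-1} - f_k$. The heart of the argument is the oscillation estimate $\norm{D_k}_{\infty} \leq a_k$: because $f_{k-1}$ is right-$\cU(k-1)$-invariant and $d_{1,n}$ is conjugation-invariant, one checks that $\abs{f_{k-1}(g) - f_{k-1}(gr)} \leq \dist(1, r\,\cU(k-1)) \leq a_k$ for every coset representative $r$ of $\cU(k)/\cU(k-1)$, and then averages this over $\cU(k)/\cU(k-1)$ to control $D_k$.

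Finally, since the $D_k$ are martingale differences with respect to the filtration by right-$\cU(k)$-invariant sets and are bounded by $a_k$, the Azuma--Hoeffding inequality yields
$$\mu_n\!\left(\abs{f - \Expect f} \geq t\right) \leq 2\exp\!\left(-\frac{t^2}{2\sum_{k=1}^n a_k^2}\right) \leq 2\exp\!\left(-\frac{n\,t^2}{32}\right).$$
Converting this mean-deviation bound into an estimate on the concentration function in the usual way (taking $f = \dist(\,\cdot\,, A)$ and comparing mean with median) gives $\alpha_{\cU(n)}(\eps) \to 0$ as $n \to \infty$ for every fixed $\eps > 0$, i.e.\ the L\'evy property. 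I expect the only genuinely delicate point to be the oscillation bound $\norm{D_k}_{\infty} \leq a_k$, where the interplay between the right-invariance of the conditional averages and the conjugation-invariance of the metric must be used carefully; the diameter input and the Azuma--Hoeffding step are then routine.
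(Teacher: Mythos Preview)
Your proposal is correct and follows essentially the same approach as the paper: the same subgroup tower $\cU(0)\subset\cdots\subset\cU(n)$, the same use of Proposition~\ref{prop_trace} to get $a_k\le 4/n$ and $\sum a_k^2\le 16/n$, and the same martingale concentration conclusion. The only difference is that the paper packages the martingale step as a citation to \cite[Theorem~2.9]{GP-06} (equivalently \cite[Theorem~7.8]{MS-86}), obtaining $\alpha_{\cU(n)}(\eps)\le 2\exp(-n\eps^2/128)$, whereas you spell out that black box via the L\'evy martingale and Azuma--Hoeffding; your identification of the oscillation bound $\|D_k\|_\infty\le a_k$ as the only delicate point is exactly right, and your sketch of it (right-$\cU(k-1)$-invariance of $f_{k-1}$ plus bi-invariance of $d_{1,n}$) is correct.
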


\begin{proof}
Our proof is based on \cite[Theorem 2.9]{GP-06}, a result of what is called the martingale technique, see \cite[Theorem 7.8]{MS-86}. Consider the compact Lie group $\cU(n),\ 3\leq n\in\bbN,$ equipped with the bi-invariant trace metric $d_{1,n}$. Embed $\cU(k)$ in $\cU(n)$ via $\cU(k)\ni u\mapsto u \oplus 1_{n-k}\in \cU(n)$, where $k\leq n,\ k\in\bbN$. We calculate the diameter $a_k:=\diam(\cU(k)/\cU(k-1))$ of the factor space $\cU(k)/\cU(k-1)$ with regard to the metric inherited from $U(n)$, where $k=1,\ldots,n$. We use Proposition \ref{prop_trace} to obtain
\begin{align*}
		a_k = \sup_{u\in\cU(k)}\inf_{v\in\cU(k-1)} d_{1,n}(1,uv) \leq\frac{4}{n}.
	\end{align*}
Thus \cite[Theorem 2.9]{GP-06} and the above calculations imply that the concentration function of the \ti{mm}-space $(\cU(n),d_{1,n},\mu_n)$ satisfies
 \begin{align*}
  \alpha_{\cU(n)}(\eps)&\leq 2\exp\left( -\frac{\eps^2}{8\sum_{k=0}^{n-1}{a_k^2}}\right) 
  \leq 2\exp\left( -\frac{n^2\eps^2}{8\sum_{k=0}^{n-1}{16}}\right)
  = 2\exp\left( -\frac{n\eps^2}{128}\right). 
 \end{align*}
	Hence, $\alpha_{\cU(n)}\rightarrow_{n\rightarrow\infty}0$ pointwise on $(0,\infty)$ and thus $(\cU(n),d_{1,n},\mu_n)_n$ is a L\'evy family.
\end{proof}

Actually Proposition \ref{prop_trace} and Proposition \ref{prop_levy} hold analogously for the orthogonal groups $\mathrm{O}(n)$, thus $\left(\mathrm{O}(n),d_{1,n},\mu_n\right)_{n}$ forms a L\'evy family.

\begin{thm}\label{thm_Levy}
The group $\cU(\cR)$ with the strong operator topology is a L\'evy group.
\end{thm}
\begin{proof} Consider the realization of $\cR$ as an infinite tensor product of copies of $M_2 \mathbb C$. The inclusion $\otimes_{i=1}^n M_2 \mathbb C \subset \cR$ yields an inclusion $\cU(2^n) \subset \cU(\cR)$.
The directed family $\{\cU(2^n)\}_{n}$ of compact subgroups of $\cU(\cR)$ is strongly dense in $\cU(\cR)$. Moreover,  the strong topology in $\cU(\cR)$ is induced from the 1-norm which restricts to $d_{1,2^n}$ on each $U(2^n)$. By Proposition \ref{prop_levy} $(\cU(2^n),d_{1,2^n},\mu_{2^n})_n$ forms a L\'evy family.
\end{proof}

%
%

\section*{Acknowledgments}

A.T.\ was supported by ERC StG 277728 and P.A.D. was partially supported by ERC CoG 614195 from the European Research Council under the European Union's Seventh Framework Programme. P.A.D.\ wants to thank Universit\"at Leipzig, the IMPRS Leipzig and the MPI-MIS Leipzig for support and an stimulating research environment. The material in this article is part of the PhD-thesis of the first author.

\bibliographystyle{alpha}

\end{document}